\documentclass[11pt]{amsart}  

\usepackage{color} 

\usepackage[text={0.75\paperwidth,0.75\paperheight}]{geometry}
\usepackage[square]{natbib} 
\usepackage[leqno]{amsmath}
\usepackage{amssymb,amsthm}
\usepackage[mathscr]{euscript}
\usepackage{bbm}
\usepackage{dsfont}
\usepackage{enumerate}
\usepackage{stmaryrd}
\usepackage[all,arc]{xy}
\DeclareMathAlphabet{\mathpzc}{OT1}{pzc}{m}{it}


\setlength{\parskip}{4pt plus.5pt minus1pt} \setlength{\parindent}{0pt}


\swapnumbers 
\theoremstyle{plain}
\newtheorem{lemma}[subsection]{Lemma}
\newtheorem{proposition}[subsection]{Proposition}
\newtheorem{theorem}[subsection]{Theorem}
\newtheorem{corollary}[subsection]{Corollary}

\theoremstyle{definition}

\theoremstyle{remark}

\newenvironment{eqcond}{\begin{enumerate}}{\end{enumerate}}

\newcommand{\df}[1]{\emph{#1}}


\newcommand{\fa}{\mathfrak{a}}

\newcommand{\fp}{\mathfrak{p}}
\newcommand{\fq}{\mathfrak{q}}

\newcommand{\fv}{\mathfrak{v}}
\newcommand{\fw}{\mathfrak{w}}
\newcommand{\fx}{\mathfrak{x}}
\newcommand{\fy}{\mathfrak{y}}

\newcommand{\fP}{\mathfrak{P}}
\newcommand{\fQ}{\mathfrak{Q}}

\newcommand{\fX}{\mathfrak{X}}


\newcommand{\yoneda}{\mathpzc{y}\hspace*{0.5pt}} 

\DeclareMathOperator{\ev}{ev}
\DeclareMathOperator{\maxop}{\vee}
\DeclareMathOperator{\maxact}{\vee}

\newcommand{\mate}[1]{\,^\ulcorner\! #1^\urcorner}

 
\newcommand{\catfont}[1]{\mathsf{#1}}

\newcommand{\SET}{\catfont{Set}}

\newcommand{\TOP}{\catfont{Top}}
\newcommand{\AP}{\catfont{App}}
\newcommand{\PSAP}{\catfont{PsApp}}
\newcommand{\MET}{\catfont{Met}}
\newcommand{\BOOL}{\catfont{Bool}}

\newcommand{\Mat}[1]{#1\text{-}\catfont{Rel}}


\xyoption{tips}
\SelectTips{cm}{10}
\UseTips

\newcommand{\relto}{{\longrightarrow\hspace*{-2.8ex}{\mapstochar}\hspace*{2.6ex}}}


\newcommand{\monadfont}[1]{\mathbbm{#1}}

\newcommand{\mU}{\monadfont{U}}
\newcommand{\umonad}{(U,e,m)}

\newcommand{\eU}{\overline{U}}


\newcommand{\doo}[1]{\overset{\centerdot}{#1}}
\newcommand{\eps}{\varepsilon}
\newcommand{\op}{\mathrm{op}}

\newcommand{\true}{\mathsf{true}}
\newcommand{\false}{\mathsf{false}}

\newcommand\adjunct[2]{\xymatrix@=8ex{\ar@{}[r]|{\top}\ar@<1mm>@/^1mm/[r]^{{#2}} & \ar@<1mm>@/^1mm/[l]^{{#1}}}}

\title{Exponentiable approach spaces}
\author{Dirk Hofmann}
\address{CIDMA -- Center for Research and Development in Mathematics and Applications, Department of Mathematics, University of Aveiro, 3810-193 Aveiro, Portugal}
\email{dirk@ua.pt}
\author{Gavin J.~Seal}
\address{Mathematics Institute for Geometry and Applications, Ecole Polytechnique F\'ed\'erale de Lausanne, 1015 Lausanne, Switzerland}
\email{gavin.seal@fastmail.fm}
\thanks{Partial financial assistance by FEDER funds through COMPETE -- Operational Programme Factors of Competitiveness (Programa Operacional Factores de Competitividade) and by Portuguese funds through the Center for Research and Development in Mathematics and Applications (University of Aveiro) and the Portuguese Foundation for Science and Technology (FCT -- Funda\c{c}\~ao para a Ci\^encia e a Tecnologia), within project PEst-C/MAT/UI4106/2011 with COMPETE number FCOMP-01-0124-FEDER-022690, and the project MONDRIAN under the contract PTDC/EIA-CCO/108302/2008 with COMPETE number FCOMP-01-0124-FEDER-010047 is gratefully acknowledge.}

\date{\today} 

\subjclass[2010]{54A05, 54B30, 54C35}
\keywords{Approach space, ultrafilter convergence, exponentiable object}

\usepackage[pdftex]{hyperref}

\begin{document}

\begin{abstract}
In this note we present a characterisation of exponentiable approach spaces in terms of ultrafilter convergence.
\end{abstract}

\maketitle

\section{Introduction}

The category $\AP$ of approach spaces was introduced in \citep{Low89} as a common framework for the study of topological and metric structures. More precisley, $\AP$ contains the category $\TOP$ of topological spaces as a reflective and coreflective full subcategory, and the category $\MET$ of generalised metric spaces (see \citep{Law73}) as a coreflective full subcategory. However, just like $\TOP$ and $\MET$, $\AP$ is not cartesian closed. This fact triggers the question of finding sufficient and necessary conditions for an approach space $X$ to be exponentiable, that is, for the cartesian product functor $(-)\times X:\AP\to\AP$ to have a right adjoint. A first important result in this direction was obtained in \citep{LS04} where it is shown that every compact Hausdorff spaces is exponentiable in the category of uniform approach spaces and contractions. Two years later, \citep{Hof06} presents a sufficient condition motivated by the characterisation of exponentiable generalised metric spaces obtained in \cite{CH06} (see Theorem \ref{thm:AppExpsuff}). This condition implies in particular that
\begin{enumerate}[\ \raisebox{1pt}{\tiny $\bullet$}]
\item a topological space is exponentiable in $\TOP$ if and only if it is exponentiable in $\AP$;
\item a generalised metric space is exponentiable in $\MET$ if and only if it is exponentiable in $\AP$;
\item every injective approach space is exponentiable in $\AP$ (see \citep[Theorem~5.14]{Hof13}).
\end{enumerate}
The aim of this note is to show that this sufficient condition is also necessary.

Note that related results where obtained in \citep{LLV97} and \citep{Hof07}. In the former paper, the authors characterise exponentiable pre-approach spaces, whereby the latter considers a slightly different product $X\otimes Y$ of spaces and characterises those approach spaces $X$ for which $(-)\otimes X:\AP\to\AP$ has a right adjoint. In this paper we follow closely the proof of \citep[Theorem~6.9]{Hof07}.


\section{Approach spaces}\label{ssec:App}

Approach spaces were introduced in \citep{Low89} and are comprehensively described in \citep{Low97}. If not stated otherwise, for notation and results we refer to \citep{Low97}.

By definition, an \df{approach space} is a set $X$ together with a function $\delta:PX\times X\to[0,\infty]$ (called a \df{distance function} or an \df{approach distance}) subject to
\begin{enumerate}
\item $\delta(\{x\},x)=0$,
\item $\delta(\varnothing,x)=\infty$,
\item $\delta(A\cup B,x)=\min\{\delta(A,x),\delta(B,x)\}$,
\item $\delta(A^{(\eps)},x)+\eps\geqslant\delta(A,x)$ (where $A^{(\eps)}=\{x\in X\mid \eps\geqslant\delta(A,x)\}$);
\end{enumerate} 
for all $A,B\subseteq X$, $x\in X$ and $\eps\in[0,\infty]$. For approach spaces $X$ and $Y$ with distance functions $\delta:PX\times X\to[0,\infty]$ and $\delta':PY\times Y\to[0,\infty]$ respectively, a map $f:X\to Y$ is a \df{contraction} if $\delta(A,x)\geqslant\delta'(f(A),f(x))$ for all $A\subseteq X$ and $x\in X$. Approach spaces and contraction maps are the objects and morphisms of the category $\AP$.

The forgetful functor
\[
 \AP\to\SET
\]
is topological, and therefore $\AP$ is complete and cocomplete and $\AP\to\SET$ preserves limits and colimits. Furthermore, the functor $\AP\to\SET$ factors through the canonical forgetful functor $\TOP\to\SET$, where $\AP\to\TOP$ sends an approach space $(X,\delta)$ to the topological space $X$ with
\[
x\in\overline{A} \iff\delta(A,x)=0.
\]
Moreover, $\AP\to\TOP$ has a fully faithful left adjoint $\TOP\to\AP$ that interprets a topological space $X$ as the approach space $X$ with distance function
\[
 \delta(A,x)=
 \begin{cases}
  0 &\text{if $x\in\overline{A}$},\\
  \infty &\text{otherwise.}
 \end{cases}
\]
Via the fully faithful functor $\TOP\to\AP$, we can consider every topological space as an approach space. Being left adjoint, $\TOP\to\AP$ preserves all colimits, but $\TOP\to\AP$ also preserves all limits and therefore also has a left adjoint.

As is the case for topological spaces, the structure of an approach space can be described in several equivalent ways, the most relevant for this paper is by ultrafilter convergence. Explicitly, let $UX$ denote the set of all ultrafilters on the set $X$; then each function $\delta:PX\times X\to[0,\infty]$ defines a map $a:UX\times X\to[0,\infty]$ via
\[a(\fx,x)=\sup_{A\in\fx}\delta(A,x),\]
and vice versa, each $a:UX\times X\to[0,\infty]$ defines a function $\delta:PX\times X\to[0,\infty]$ via
\[\delta(A,x)=\inf_{A\in\fx}a(\fx,x);\]
moreover, every approach distance is completely determined by its corresponding ultrafilter convergence. Axioms characterising those numerical relations $UX\relto X$ induced this way by an approach distance are given in \citep{LL88}; however, we will use here the description obtained in \citep{CH03} that we recall next.

\section{The ultrafilter monad on numerical relations}

We start by describing \df{numerical relations} that should be seen as relations with truth values in $[0,\infty]$. Here, $0$ corresponds to $\true$ and $\infty$ to $\false$, and we consider $[0,\infty]$ with its natural order. With respect to this order, the addition $u+(-):[0,\infty]\to[0,\infty]$ and truncated subtraction $(-)\ominus u:[0,\infty]\to[0,\infty]$ (given by $v\ominus u=\max\{v-u,0\}$) are adjoint, so that for all $u,v,w\in [0,\infty]$
\[
 u+v\geqslant w\iff v \geqslant w\ominus u.
\]

A \df{numerical relation} $r:X\relto Y$ from a set $X$ to a set $Y$ is a map $r:X\times Y\to[0,\infty]$. The composite $s\cdot r:X\relto Z$ of $r:X\relto Y$ with $s:Y\relto Z$ is the numerical relation defined by
\begin{align*}
s\cdot r(x,z)&=\inf_{y\in Y}(r(x,y)+s(y,z))
\end{align*}
for all $x\in X$, $y\in Y$, $z\in Z$. Every ordinary relation becomes a numerical relation by interpreting $\true$ as $0$ and $\false$ as $\infty$, and with this interpretation the identity function is also the identity numerical relation. Numerical relations are also ordered via
\[
r\geqslant r'\iff\text{for all $x\in X$ and $y\in Y$, }r(x,y)\geqslant r'(x,y),
\]
(for $r,r':X\relto Y$), and since composition of numerical relations preserves this order in both variables, sets with numerical relations form an ordered category
\[
\Mat{[0,\infty]}.
\]
Furthermore, for $r:X\relto Y$ in $\Mat{[0,\infty]}$, we define $r^\circ:Y\relto X$ by $r^\circ(y,x)=r(x,y)$ (for all $x\in X$, $y\in Y$), and obtain this way a locally monotone functor $(-)^\circ:\Mat{[0,\infty]}^\op\to\Mat{[0,\infty]}$.

Given $u,v\in[0,\infty]$, we denote the maximum of $u$ and $v$ by $u\maxop v$. More generally, every $u\in[0,\infty]$ induces an action on $\Mat{[0,\infty]}(X,Y)$: for $r:X\relto Y$, we define $u\maxact r:X\relto Y$ by $(u\maxact r)(x,y)=u\maxop r(x,y)$, for all $x\in X$ and $y\in Y$.

\begin{lemma}\label{lem:SomeRulesMax}
Let $u,v\in[0,\infty]$, $r:X\relto Y$, $s:Y\relto Z$ in $\Mat{[0,\infty]}$ and $f:X\to Y$ be a map. Then the following hold:
\begin{enumerate}
\item $(v\maxact s)\cdot(u\maxact r)\geqslant (v+u)\maxact (s\cdot r)$,
\item $(v\maxact s)\cdot f=v\maxact (s\cdot f)$.
\end{enumerate}
\end{lemma}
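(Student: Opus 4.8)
The plan is to verify the two assertions by direct computation from the definitions of composition and of the max-action, evaluating both sides at an arbitrary pair of points.

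For part~(1), I would fix $x\in X$ and $z\in Z$ and compare the two numerical relations pointwise. For each $y\in Y$, the summand occurring in the composite on the left,
\[
(u\maxact r)(x,y)+(v\maxact s)(y,z)=\bigl(u\maxop r(x,y)\bigr)+\bigl(v\maxop s(y,z)\bigr),
\]
dominates both $u+v$ (since $u\maxop a\geqslant u$ for every $a$) and $r(x,y)+s(y,z)$, hence dominates their maximum $(u+v)\maxop\bigl(r(x,y)+s(y,z)\bigr)$. Passing to the infimum over $y$ on the left-hand side, and using the elementary inequality $\inf_y\bigl(c\maxop g(y)\bigr)\geqslant c\maxop\inf_y g(y)$ (which holds because $c\maxop g(y)\geqslant c\maxop\inf_y g(y)$ term by term), one gets
\[
\bigl((v\maxact s)\cdot(u\maxact r)\bigr)(x,z)\geqslant (u+v)\maxop\bigl((s\cdot r)(x,z)\bigr)=\bigl((v+u)\maxact(s\cdot r)\bigr)(x,z),
\]
which is the claimed inequality.

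For part~(2), recall that $f$ is read as the numerical relation with $f(x,y)=0$ if $y=f(x)$ and $f(x,y)=\infty$ otherwise, so that $\inf_y\bigl(f(x,y)+t(y,z)\bigr)=t(f(x),z)$ for any $t:Y\relto Z$. Taking $t=v\maxact s$ gives $\bigl((v\maxact s)\cdot f\bigr)(x,z)=v\maxop s(f(x),z)$, whereas $\bigl(v\maxact(s\cdot f)\bigr)(x,z)=v\maxop(s\cdot f)(x,z)=v\maxop s(f(x),z)$; the two coincide, which gives the equality.

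Neither step is delicate; the only place that calls for a moment's attention is the interchange of the infimum with the binary maximum in~(1), where only the inequality ``$\geqslant$'' is needed (and is all that is available). It is worth noting that equality typically fails in~(1) --- already for one-element sets, $\bigl(u\maxop r\bigr)+\bigl(v\maxop s\bigr)$ need not equal $(u+v)\maxop(r+s)$ --- so (1) is genuinely an inequality rather than an identity, in contrast to~(2).
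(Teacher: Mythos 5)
Your proof is correct. The paper states this lemma without proof (it is treated as a routine computation), and your pointwise verification — including the only mildly non-obvious step, the inequality $\inf_y\bigl(c\maxop g(y)\bigr)\geqslant c\maxop\inf_y g(y)$, and the evaluation of composition with a map viewed as the relation taking the value $0$ on its graph and $\infty$ elsewhere — is exactly the intended argument; your closing observation that (1) is genuinely only an inequality is also accurate.
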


The ultrafilter monad $\mU=\umonad$ on $\SET$ is induced by the adjunction
\[
 \BOOL^\op\adjunct{\hom(-,2)}{\hom(-,2)}\SET,
\]
where $\BOOL$ denotes the category of Boolean algebras and homomorphisms. Explicitly, the ultrafilter functor $U:\SET\to\SET$ sends a set $X$ to the set $UX$ of all ultrafilters on $X$, and for a map $f:X\to Y$, the map  $Uf:UX\to UY$ sends $\fx\in UX$ to the ultrafilter $Uf(\fx)=\{B\subseteq Y\mid f^{-1}(B)\in\fx\}$. The natural transformations $e:1\to U$ and $m:UU\to U$ have as components at $X$ the maps
\begin{alignat*}{3}
 e_X:X & \to UX &\qquad\text{and}\qquad&& m_X: UUX &\to UX\\
x & \mapsto\doo{x}=\{A\subseteq X\mid x\in A\} &&&
\fX & \mapsto\{A\subseteq X\mid A^\sharp\in\fX\}
\end{alignat*}
respectively, where $A^\sharp=\{\fa\in UX\mid A\in\fa\}$. We also mention that the functor $U:\SET\to\SET$ preserves weak pullbacks.

The Eilenberg--Moore algebras for the ultrafilter monad $\mU=\umonad$ on $\SET$ are identified in \citep{Man69} as precisely the compact Hausdorff spaces with ultrafilter convergence as structure, and the $\mU$-homo\-morphisms are the continuous maps. A central example of a compact Hausdorff space is the free $\mU$-algebra $UX$ with ultrafilter convergence $m_X:UUX\to UX$. Recall from \ref{ssec:App} that $UX$ can be also viewed as an approach space via the embedding $\TOP\to\AP$. We also frequently use the compact Hausdorff space $[0,\infty]$ with convergence (that is, its $\mU$-algebra structure)
\[
 \xi:U[0,\infty]\to[0,\infty],\,\fv\mapsto\sup_{A\in\fv}\inf_{u\in A}u.
\]

The ultrafilter functor $U:\SET\to\SET$ extends to a locally monotone functor $\eU:\Mat{[0,\infty]}\to\Mat{[0,\infty]}$ defined by
\[
\eU r(\fx,\fy)=\sup_{A\in\fx,B\in\fy}\inf_{x\in A,y\in B}r(x,y),
\]
for a numerical relation $r:X\times Y\to[0,\infty]$ and $\fx\in UX$, $\fy\in UY$. The following alternative description of $\eU r$ will be useful in the sequel (see \citep{Hof07} and \citep[Subsection 4.1]{CH09}).

\begin{proposition}\label{prop:extU}
For every $r:X\relto Y$ in $\Mat{[0,\infty]}$, $\fx\in UX$ and $\fy\in UY$,
\[
\eU r(\fx,\fy)=\inf\{\xi\cdot Ur(\fw)\mid \fw\in U(X\times Y),\,U\pi_1(\fw)=\fx,\,U\pi_2(\fw)=\fy\}.
\]
\end{proposition}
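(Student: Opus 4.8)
The plan is to establish the two inequalities separately, after recording one elementary identity: for any map $g:Z\to[0,\infty]$ and any $\fz\in UZ$,
\[
\xi\cdot Ug(\fz)=\sup_{W\in\fz}\inf_{z\in W}g(z).
\]
This follows directly from the definition of $\xi$ together with the facts that $Ug(\fz)=\{C\subseteq[0,\infty]\mid g^{-1}(C)\in\fz\}$, that $g(W)\in Ug(\fz)$ whenever $W\in\fz$, and that $\inf_{u\in g(W)}u=\inf_{z\in W}g(z)$. Applying it to $g=r$ gives $\xi\cdot Ur(\fw)=\sup_{W\in\fw}\inf_{(x,y)\in W}r(x,y)$ for every $\fw\in U(X\times Y)$. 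For the inequality ``$\leqslant$'', fix an admissible $\fw$, i.e.\ $U\pi_1(\fw)=\fx$ and $U\pi_2(\fw)=\fy$. For $A\in\fx$ and $B\in\fy$ the sets $\pi_1^{-1}(A)=A\times Y$ and $\pi_2^{-1}(B)=X\times B$ lie in $\fw$, hence so does $A\times B$; therefore $\inf_{x\in A,\,y\in B}r(x,y)=\inf_{(x,y)\in A\times B}r(x,y)\leqslant\xi\cdot Ur(\fw)$, and taking the supremum over $A\in\fx$, $B\in\fy$ yields $\eU r(\fx,\fy)\leqslant\xi\cdot Ur(\fw)$, whence $\eU r(\fx,\fy)$ is below the infimum.

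For the reverse inequality — the substantive direction — set $c=\eU r(\fx,\fy)$ and, for $\eps>0$, let $R_\eps=\{(x,y)\in X\times Y\mid r(x,y)\leqslant c+\eps\}$. The family $\{(A\times B)\cap R_\eps\mid A\in\fx,\ B\in\fy,\ \eps>0\}$ has the finite intersection property: a finite intersection is again of the form $(A'\times B')\cap R_{\eps'}$, and this set is nonempty because $\inf_{x\in A',\,y\in B'}r(x,y)\leqslant c$ by definition of $c$, so some pair in $A'\times B'$ has $r$-value at most $c+\eps'$ (when $c=\infty$ this is immediate, as $A'$ and $B'$ are nonempty). Extend the family to an ultrafilter $\fw$ on $X\times Y$. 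Since $A\times Y\in\fw$ and $X\times B\in\fw$ for all $A\in\fx$, $B\in\fy$, the ultrafilter $U\pi_1(\fw)$ contains $\fx$ and hence equals it, and likewise $U\pi_2(\fw)=\fy$, so $\fw$ is admissible. Finally, for any $W\in\fw$ and any $\eps>0$ the set $W\cap R_\eps$ lies in $\fw$, hence is nonempty, forcing $\inf_{(x,y)\in W}r(x,y)\leqslant c+\eps$; letting $\eps\to0$ and taking the supremum over $W\in\fw$ gives $\xi\cdot Ur(\fw)\leqslant c$, so the infimum in the proposition is at most $\eU r(\fx,\fy)$.

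The only point requiring genuine care is this reverse inequality, and within it the one nontrivial move is the construction of the filter base and the verification that the resulting ultrafilter is admissible and witnesses the bound; the ``$\leqslant$'' direction and the preliminary identity for $\xi\cdot Ug$ are routine. One may also note that the set of admissible $\fw$ is nonempty on general grounds since $U$ preserves weak pullbacks, but the construction above exhibits such a $\fw$ directly and so this is not needed.
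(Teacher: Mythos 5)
Your proof is correct, but note that the paper itself offers no argument for Proposition \ref{prop:extU}: it is quoted from \citep{Hof07} and \citep[Subsection 4.1]{CH09}, so your write-up supplies the details the paper delegates to the literature. Your route is the natural direct one: the inequality ``$\leqslant$'' from $A\times B\in\fw$ for every admissible $\fw$, and the substantive inequality ``$\geqslant$'' by explicitly constructing a witnessing ultrafilter from the filter base $\{(A\times B)\cap R_\eps\mid A\in\fx,\,B\in\fy,\,\eps>0\}$, checking admissibility via the fact that an ultrafilter containing an ultrafilter equals it, and passing from $\inf_{(x,y)\in W}r(x,y)\leqslant c+\eps$ for all $\eps>0$ to $\xi\cdot Ur(\fw)\leqslant c$; the case $c=\infty$ is correctly disposed of. Two minor glosses: in the preliminary identity $\xi\cdot Ug(\fz)=\sup_{W\in\fz}\inf_{z\in W}g(z)$, the facts you list only yield ``$\geqslant$''; for ``$\leqslant$'' take $W=g^{-1}(C)$ for $C\in Ug(\fz)$ and use $\inf_{z\in g^{-1}(C)}g(z)\geqslant\inf_{u\in C}u$. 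Your closing remark is also on point: weak-pullback preservation of $U$ only guarantees that admissible $\fw$ exist (surjectivity of $\langle U\pi_1,U\pi_2\rangle$), whereas the proposition needs an ultrafilter realising the bound, which is exactly what your construction provides; this is the same mechanism used in the cited sources, so your proof is in the spirit of the references rather than a genuinely different method.
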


We also note that $\eU(r^\circ)=(\eU r)^\circ$ for all numerical relations $r$; the multiplication $m$ remains a natural transformation $m:\eU\,\eU\to\eU$, but in general $e:1\to \eU$ satisfies only $e_Y\cdot r\geqslant \eU r\cdot e_X$ for numerical relations $r:X\relto Y$. Finally, for all $u\in[0,\infty]$ and $r:X\relto Y$ in $\Mat{[0,\infty]}$, we have $\eU(u\maxact r)=u\maxact \eU r$.

\section{Approach spaces via convergence}

We now have all necessary ingredients to present the characterisation of the ultrafilter convergence relation of an approach space obtained in \citep{CH03}: a numerical relation $a:UX\relto X$ is induced by an approach distance function $\delta:PX\times X\to[0,\infty]$ if and only if
\begin{alignat*}{3}
 e_X^\circ\geqslant a &\qquad\text{and}\qquad& a\cdot \eU a\geqslant a\cdot m_X.
\end{alignat*}
A numerical relation $a:UX\relto X$ satisfying the first inequality is called \df{reflexive}, and $a$ is called \df{transitive} if it satisfies the second inequality. Pointwise, these formulas read as
\begin{alignat*}{3}
 0\geqslant a(\doo{x},x) &\qquad\text{and}\qquad& \eU a(\fX,\fx) + a(\fx,x)\geqslant a(m_X(\fX),x),
\end{alignat*}
for all $\fX\in UUX$, $\fx\in UX$ and $x\in X$. For approach spaces $X$ and $Y$ with ultrafilter convergence $a:UX\relto X$ and $b:UY\relto Y$ respectively, a map $f:X\to Y$ is a contraction map if and only if
\[
f\cdot a\geqslant b\cdot Uf,
\]
which is equivalent to $a\geqslant f^\circ\cdot b\cdot Uf $, and reads in pointwise notation as
\[
a(\fx,x)\geqslant b(Uf(\fx),f(x)),
\]
for all $\fx\in UX$ and $x\in X$. Since the condition $f\cdot a\geqslant b\cdot Uf$ does not refer to any property of $a$ or $b$, we will use the terminology ``contraction'' also in contexts where $a:UX\relto X$ and $b:UY\relto Y$ are just numerical relations. For instance, functoriality of $\eU$ implies at once that if $f$ is a contraction, then so is $Uf:UX\relto UY$, where we consider the convergence relations $\eU a:UUX\relto UX$ and $\eU b:UUY\relto UY$ on $UX$ and $UY$, respectively; but $\eU a$ is in general neither reflexive nor transitive. 

The extended real half-line $[0,\infty]$ becomes an approach space with convergence
\[
 U[0,\infty]\times[0,\infty]\to[0,\infty],\,(\fv,v)\mapsto v\ominus\xi(\fv).
\]
The approach space $[0,\infty]$ takes the role of the Sierpi\'nski space, in particular, $[0,\infty]$ is initially dense in $\AP$. For more information we refer to \citep[Example 1.8.33 and Proposition 1.10.8]{Low97}. We now present some results that relate $[0,\infty]$ with contractions, and that we will use in our main result.

\begin{lemma}\label{lem:SomeMaps}
The following assertions hold.
\begin{enumerate}
\item The binary suprema map $\maxop:[0,\infty]\times[0,\infty]\to[0,\infty]$ is a contraction.
\item For every $u\in[0,\infty]$, the map $t_u:[0,\infty]\to[0,\infty],\,v\mapsto u\maxop v$ is a contraction.
\item For each approach space $X$, the convergence $a:UX\times X\to[0,\infty]$ is a contraction.
\end{enumerate}
\end{lemma}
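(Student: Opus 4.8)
The plan is to work throughout with ultrafilter convergence and to use the standard description of products in $\AP$: if $X$ and $Y$ carry convergences $a:UX\relto X$ and $b:UY\relto Y$, then, since $\AP\to\SET$ is topological and products are formed as initial lifts along the projections, the product $X\times Y$ carries the convergence $c:U(X\times Y)\relto X\times Y$ with
\[
 c(\fw,(x,y))=a(U\pi_1(\fw),x)\maxop b(U\pi_2(\fw),y)
\]
for all $\fw\in U(X\times Y)$. I would record this at the outset, together with the fact that the approach structure $UX$ inherits from $\TOP\to\AP$ (that is, from the free $\mU$-algebra $m_X:UUX\to UX$ regarded as a compact Hausdorff space) has convergence $\alpha$ given by $\alpha(\fX,\fx)=0$ if $m_X(\fX)=\fx$ and $\alpha(\fX,\fx)=\infty$ otherwise.

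For (1), set $p=\xi(U\pi_1(\fw))$ and $q=\xi(U\pi_2(\fw))$ for $\fw\in U([0,\infty]\times[0,\infty])$. Since $\maxop$ is continuous for the usual topology on $[0,\infty]$ it is a $\mU$-homomorphism, and as the $\mU$-algebra structure on the product is formed componentwise this gives $\xi(U\maxop(\fw))=p\maxop q$. Substituting this, the product formula above, and the convergence $(\fv,v)\mapsto v\ominus\xi(\fv)$ of $[0,\infty]$, the inequality defining a contraction reduces to
\[
 (u\ominus p)\maxop(v\ominus q)\geqslant(u\maxop v)\ominus(p\maxop q),
\]
which I would check by assuming $u\geqslant v$ without loss of generality, so that the right-hand side equals $u\ominus(p\maxop q)\leqslant u\ominus p\leqslant(u\ominus p)\maxop(v\ominus q)$ by antitonicity of $\ominus$ in its second argument; a little care with the convention $\infty-\infty=0$ is needed. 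For (2) I would not compute afresh but factor $t_u=\maxop\circ\langle\underline{u},\mathrm{id}\rangle$, where $\underline{u}:[0,\infty]\to[0,\infty]$ is the constant map at $u$: constant maps are contractions since the convergence of $[0,\infty]$ sends $(\doo{u},u)$ to $0$, the identity is a contraction, hence $\langle\underline{u},\mathrm{id}\rangle$ is a contraction by the universal property of the product, and composing with the contraction $\maxop$ from (1) yields the claim.

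For (3), write $a:UX\relto X$ for the convergence of $X$. By the product formula and the description of the convergence of $UX$ above, $c(\fW,(\fx,x))=\infty$ whenever $\fx\neq m_X(U\pi_1(\fW))$, so the contraction inequality $c(\fW,(\fx,x))\geqslant a(\fx,x)\ominus\xi(Ua(\fW))$ holds trivially then; if instead $\fx=m_X(U\pi_1(\fW))$ it reads $a(U\pi_2(\fW),x)\geqslant a(m_X(U\pi_1(\fW)),x)\ominus\xi(Ua(\fW))$, which by the adjunction $u+v\geqslant w\iff v\geqslant w\ominus u$ is equivalent to
\[
 \xi(Ua(\fW))+a(U\pi_2(\fW),x)\geqslant a(m_X(U\pi_1(\fW)),x).
\]
Here I would apply Proposition~\ref{prop:extU} with $\fW$ itself as the witnessing ultrafilter over $(U\pi_1(\fW),U\pi_2(\fW))$, obtaining $\xi(Ua(\fW))\geqslant\eU a(U\pi_1(\fW),U\pi_2(\fW))$, and then invoke transitivity of $a$, namely $\eU a(\fX,\fy)+a(\fy,x)\geqslant a(m_X(\fX),x)$, with $\fX=U\pi_1(\fW)$ and $\fy=U\pi_2(\fW)$.

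None of the steps is deep. The places that need attention are the explicit identification of the product convergence and of the convergence of $UX$, the bookkeeping with $U\pi_1$, $U\pi_2$ and $m_X$ in part (3), and --- the one genuinely non-mechanical point --- the idea of feeding $\fW$ itself into Proposition~\ref{prop:extU}; I expect that to be the step to get right, with the arithmetic of $\ominus$ at $\infty$ in (1) the only other minor subtlety.
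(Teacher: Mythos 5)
Your proposal is correct, and it takes the route the paper itself has in mind: the paper's proof merely declares (1) and (2) immediate and refers to \citep[Lemma~6.7]{Hof07} for (3), and your explicit verification --- the product convergence formula together with the fact that $\maxop$ is a $\mU$-algebra homomorphism for (1)--(2), and, for (3), feeding $\fW$ itself into Proposition~\ref{prop:extU} and then invoking transitivity of $a$ --- is precisely the standard argument behind that citation. The only points deserving care, which you handle, are the $\ominus$-arithmetic at $\infty$ in (1) and the case split on whether $\fx=m_X(U\pi_1(\fW))$ in (3).
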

\begin{proof}
The first two assertions are immediate. The third is essentially \citep[Lemma~6.7]{Hof07}.
\end{proof}

One can equivalently consider a map $\varphi:X\to[0,\infty]$ as a numerical relation $\varphi:1\relto X$, and with this interpretation one has:

\begin{proposition}
Let $X$ be an approach space with convergence $a:UX\relto X$ and let $\varphi:X\to[0,\infty]$ be a map. Then $\varphi:X\to[0,\infty]$ is a contraction if and only if the numerical relation $\varphi:1\relto X$ satisfies $a\cdot \eU\varphi\cdot e_1\geqslant\varphi$.
\end{proposition}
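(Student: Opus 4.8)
The plan is to translate both sides into pointwise inequalities over $UX\times X$ and observe that they are carried to each other by the adjunction $u+v\geqslant w\iff v\geqslant w\ominus u$ recalled in Section~3.

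First I would spell out the contraction condition concretely. The approach space $[0,\infty]$ carries the convergence $b:U[0,\infty]\relto[0,\infty]$ with $b(\fv,v)=v\ominus\xi(\fv)$, so by the convergence-theoretic description of contraction maps (equivalently, by the pointwise form $a(\fx,x)\geqslant b(Uf(\fx),f(x))$), the map $\varphi:X\to[0,\infty]$ is a contraction if and only if
\[
 a(\fx,x)\geqslant\varphi(x)\ominus\xi(U\varphi(\fx))
\]
for all $\fx\in UX$ and $x\in X$.

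Next I would compute the composite $a\cdot\eU\varphi\cdot e_1:1\relto X$. Since $1$ is terminal, $U1$ is a singleton and $e_1:1\to U1$ is a bijection, so composing with $e_1$ merely evaluates at the unique point of $U1$. To identify $\eU\varphi$ I would invoke Proposition~\ref{prop:extU} (or compute directly from the definition of $\eU$): regarding $\varphi$ as $\varphi:1\relto X$ and using the isomorphism $1\times X\cong X$, the only $\fw\in U(1\times X)$ with $U\pi_2(\fw)=\fx$ is $\fx$ itself, whence $\eU\varphi(e_1(\ast),\fx)=\xi\cdot U\varphi(\fx)=\xi(U\varphi(\fx))$. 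Composing with $a$ then yields
\[
 (a\cdot\eU\varphi\cdot e_1)(\ast,x)=\inf_{\fx\in UX}\bigl(\xi(U\varphi(\fx))+a(\fx,x)\bigr),
\]
so the inequality $a\cdot\eU\varphi\cdot e_1\geqslant\varphi$ unwinds to: for all $\fx\in UX$ and $x\in X$, $\xi(U\varphi(\fx))+a(\fx,x)\geqslant\varphi(x)$.

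Finally, applying the adjunction with $u=\xi(U\varphi(\fx))$, $v=a(\fx,x)$ and $w=\varphi(x)$ turns this last inequality into $a(\fx,x)\geqslant\varphi(x)\ominus\xi(U\varphi(\fx))$, which is exactly the contraction condition from the first step; since the passage is an equivalence at each $(\fx,x)$, the two conditions coincide. I expect the only step requiring genuine care to be the computation of $\eU\varphi\cdot e_1$ — keeping track of the (trivial) ultrafilter data on $1$ and correctly identifying $\eU\varphi$ with $\xi\cdot U\varphi$; the remainder is a routine translation between the relational and pointwise formulations.
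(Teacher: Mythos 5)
Your argument is correct: the identification $\eU\varphi(e_1(\ast),\fx)=\xi\cdot U\varphi(\fx)$ is valid (either via Proposition~\ref{prop:extU} with the bijection $\pi_2:1\times X\to X$, or directly from the definition of $\eU$, since $\sup_{B\in\fx}\inf_{x\in B}\varphi(x)=\xi(U\varphi(\fx))$), the composite then unwinds to $\inf_{\fx\in UX}\bigl(\xi(U\varphi(\fx))+a(\fx,x)\bigr)\geqslant\varphi(x)$, and the adjunction $u+v\geqslant w\iff v\geqslant w\ominus u$ turns the resulting pointwise inequalities into exactly the contraction condition $a(\fx,x)\geqslant\varphi(x)\ominus\xi(U\varphi(\fx))$. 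Note, however, that the paper does not prove this proposition at all: it simply cites \citep[Theorem~4.3]{CH09}, where the statement is an instance of a general result on $\mU$-categories (Lawvere completeness in topology), so your route is genuinely different in character. What your direct verification buys is self-containedness and transparency --- it makes visible that the equivalence is nothing more than the $+\dashv\ominus$ adjunction applied pointwise after computing $\eU\varphi\cdot e_1=\xi\cdot U\varphi$ --- whereas the citation buys generality, since the result in \citep{CH09} holds uniformly for lax algebras over an arbitrary quantale and monad rather than only for approach spaces and $[0,\infty]$.
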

\begin{proof}
See \citep[Theorem 4.3]{CH09}.
\end{proof}

The map $\varphi_{u,v}$ of the following corollary will be used in the proof of Theorem \ref{thm:AppExpChar}.

\begin{corollary}\label{cor:phi_u_v_cont}
Let $X$ be an approach space, $u,v\in[0,\infty]$ and $\fX\in UUX$. Then
\[
 \varphi_{u,v}:X\to[0,\infty],\, x\mapsto\inf\{(u\maxop a(\fx,x))+(v\maxop\eU a(\fX,\fx))\mid \fx\in UX\}
\]
is a contraction.
\end{corollary}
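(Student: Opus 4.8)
The plan is to use the Proposition stated just above: since $\varphi_{u,v}$ is a map $X\to[0,\infty]$, it is a contraction precisely when, viewed as a numerical relation $1\relto X$, it satisfies $a\cdot\eU\varphi_{u,v}\cdot e_1\geqslant\varphi_{u,v}$; so I would verify this single inequality, by a formal computation in $\Mat{[0,\infty]}$ in the spirit of \citep[Theorem~6.9]{Hof07}.

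Write $\widehat{\fX}:1\relto UUX$ for (the graph of) the map sending the point to $\fX$. Unwinding the composition formula shows that the defining expression for $\varphi_{u,v}$ is exactly the composite
\[
\varphi_{u,v}=(u\maxact a)\cdot(v\maxact\eU a)\cdot\widehat{\fX},
\]
the infimum over $\fx\in UX$ being the one produced by composing $v\maxact\eU a$ with $u\maxact a$. Applying $\eU$, and using that $\eU$ is a functor, that $\eU(w\maxact r)=w\maxact\eU r$, that $\eU$ extends $U$ (so $\eU$ carries the graph of a map $g$ to the graph of $Ug$), and naturality of $e$ at the map $\widehat{\fX}$ (so that $\eU\widehat{\fX}\cdot e_1=e_{UUX}\cdot\widehat{\fX}$), I would obtain
\[
\eU\varphi_{u,v}\cdot e_1=(u\maxact\eU a)\cdot(v\maxact\eU\eU a)\cdot e_{UUX}\cdot\widehat{\fX}.
\]

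Next I would isolate the key identity
\[
m_X\cdot(v\maxact\eU\eU a)\cdot e_{UUX}=v\maxact\eU a,
\]
which follows by moving $e_{UUX}$ and $m_X$ through the action with Lemma~\ref{lem:SomeRulesMax}(2) and the symmetric rule for composing with a map on the left, then replacing $m_X\cdot\eU\eU a$ by $\eU a\cdot m_{UX}$ via naturality of $m:\eU\eU\to\eU$, and finally using the monad unit law $m_{UX}\cdot e_{UUX}=1_{UUX}$. On the other hand, since $a=0\maxact a$, Lemma~\ref{lem:SomeRulesMax}(1), transitivity of $a$, and Lemma~\ref{lem:SomeRulesMax}(2) give
\[
a\cdot(u\maxact\eU a)\geqslant u\maxact(a\cdot\eU a)\geqslant u\maxact(a\cdot m_X)=(u\maxact a)\cdot m_X.
\]
Composing this inequality on the right with $(v\maxact\eU\eU a)\cdot e_{UUX}\cdot\widehat{\fX}$ (composition is monotone) and then applying the key identity yields
\[
a\cdot\eU\varphi_{u,v}\cdot e_1\geqslant(u\maxact a)\cdot m_X\cdot(v\maxact\eU\eU a)\cdot e_{UUX}\cdot\widehat{\fX}=(u\maxact a)\cdot(v\maxact\eU a)\cdot\widehat{\fX}=\varphi_{u,v},
\]
as required.

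The computation is soft; the step I expect to require the most care is the bookkeeping around $\eU$: one must use that $\eU$ is strictly functorial and strictly compatible with the $\maxact$-actions, so that the formula for $\eU\varphi_{u,v}$ is an exact equality; check that the unit $e$ enters only through $\eU$ applied to honest maps, where it is well behaved (unlike the merely lax inequality $e_Y\cdot r\geqslant\eU r\cdot e_X$ for general $r$); and make sure the one genuine inequality in the chain points the right way. It is also worth double-checking the ultrafilter-on-a-point identifications behind $\eU\widehat{\fX}\cdot e_1=e_{UUX}\cdot\widehat{\fX}$, so that no $\inf\varnothing=\infty$ creeps in with the wrong sign.
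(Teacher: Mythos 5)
Your proof is correct and is essentially the paper's own argument: the same decomposition $\varphi_{u,v}=(u\maxact a)\cdot(v\maxact\eU a)\cdot i_\fX$ as a composite $1\relto UUX\relto UX\relto X$, the same contraction criterion $a\cdot\eU\varphi_{u,v}\cdot e_1\geqslant\varphi_{u,v}$, and the same chain of steps using Lemma~\ref{lem:SomeRulesMax}, transitivity of $a$, naturality of $e$ (on maps) and of $m$, and the unit law. The only cosmetic difference is that you isolate the identity $m_X\cdot(v\maxact\eU\,\eU a)\cdot e_{UUX}=v\maxact\eU a$ via the (easily verified) left-hand analogue of Lemma~\ref{lem:SomeRulesMax}(2), where the paper instead applies naturality of $m$ directly to $\eU\,\eU(v\maxact a)$.
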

\begin{proof}
Let $i_\fX:1\to UUX$ be the map that points to $\fX\in UUX$. Then the numerical relation $\varphi=\varphi_{u,v}:1\relto X$ is the composite
\[
 \xymatrix{1\ar[r]|-{\object@{|}}^-{\rule[-3pt]{0pt}{0pt}i_\fX} & UUX\ar[r]|-{\object@{|}}^-{\rule[-2pt]{0pt}{0pt}v\maxop\eU a} & UX\ar[r]|-{\object@{|}}^-{\rule[-2pt]{0pt}{0pt}u\maxop a} & X}
\]
in $\Mat{[0,\infty]}$, and we calculate
\begin{align*}
 a\cdot \eU\varphi\cdot e_1
 &=(0\maxop a)\cdot(u\maxop\eU a)\cdot\eU\,\eU(v\maxop a)\cdot Ui_\fX\cdot e_1\\
 &\geqslant (u\maxop a\cdot\eU a)\cdot\eU\,\eU(v\maxop a)\cdot e_{UUX}\cdot i_\fX &&\text{(Lemma \ref{lem:SomeRulesMax})}\\
 &\geqslant (u\maxop a)\cdot m_X\cdot\eU\,\eU(v\maxop a)\cdot e_{UUX}\cdot i_\fX &&\text{($a$ is transitive and Lemma \ref{lem:SomeRulesMax})}\\
 &=(u\maxop a)\cdot \eU(v\maxop a)\cdot m_{UX}\cdot e_{UUX}\cdot i_\fX\\
 &=(u\maxop a)\cdot (v\maxop\eU a)\cdot i_\fX=\varphi. &&\qedhere
\end{align*}
\end{proof}

The convergence $c$ of the product $X\times Y$ of approach spaces $X$ and $Y$ is given by
\begin{equation}\label{eq:prodAP}
 c(\fw,(x,y))=a(U\pi_1(\fw),x))\maxop b(U\pi_2(\fw),y),
\end{equation}
for all $\fw\in U(X\times Y)$, $x\in X$ and $y\in Y$. The sufficient condition obtained in \citep{Hof06} for an approach space to be exponentiable is the following.

\begin{theorem}\label{thm:AppExpsuff}
Let $X$ be an approach space with ultrafilter convergence $a:UX\relto X$. Then $X$ is exponentiable if
\[
 (u+v)\maxop a(m_X(\fX),x_0)\geqslant\inf\{(u\maxop a(\fx,x_0))+(v\maxop\eU a(\fX,\fx))\mid \fx\in UX\},
\]
for all $\fX\in UUX$, $x_0\in X$ and $u,v\in[0,\infty]$.
\end{theorem}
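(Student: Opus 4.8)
The plan is to construct the exponential $Z^X$ directly for an arbitrary approach space $Z=(Z,c)$. Its underlying set is forced to be $\AP(X,Z)$, the set of all contractions $X\to Z$, since $\AP(1\times X,Z)\cong\AP(X,Z)$ for the terminal approach space $1$. On this set I would place the convergence relation $\fspstr{a}{c}:U\AP(X,Z)\relto\AP(X,Z)$ characterised as the smallest numerical relation for which the evaluation map $\ev:\AP(X,Z)\times X\to Z$, $(g,x)\mapsto g(x)$, becomes a contraction with respect to the product structure \eqref{eq:prodAP}; by Proposition \ref{prop:extU} this admits an explicit description in terms of ultrafilters $\fw$ on $\AP(X,Z)\times X$ and the values $c(U\ev(\fw),g(x))$, $a(U\pi_2(\fw),x)$. (Alternatively, one could first treat the case $Z=[0,\infty]$ and then invoke that $[0,\infty]$ is initially dense in $\AP$.)

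With this candidate at hand, the routine part of the argument runs as follows. Reflexivity of $\fspstr{a}{c}$ follows from reflexivity of $c$ together with the fact that each $g\in\AP(X,Z)$ is itself a contraction, so that $c(Ug(\fx),g(x))\leqslant a(\fx,x)$, and $\ev$ is a contraction by construction. Given an approach space $Y=(Y,b)$ and a contraction $h:Y\times X\to Z$ (so that each $h(y,-):X\to Z$ is a contraction, the projection $Y\times X\to Y$ being split by the contractions $x\mapsto(y,x)$), I would check that the transpose $\widehat h:Y\to Z^X$, $y\mapsto h(y,-)$, is again a contraction: via $\widehat h\times\mathrm{id}_X$ and the projection to $Y$, the product $Y\times X$ is the pullback of $\pi_1:Z^X\times X\to Z^X$ along $\widehat h:Y\to Z^X$, so since $U$ preserves weak pullbacks any ultrafilter $\fw$ on $Z^X\times X$ with $U\pi_1(\fw)=U\widehat h(\fy)$ lifts to an ultrafilter $\fv$ on $Y\times X$ with $U(\widehat h\times\mathrm{id})(\fv)=\fw$ and $U\pi_1(\fv)=\fy$; as $\ev\cdot(\widehat h\times\mathrm{id})=h$, the contraction property of $h$ and the formula \eqref{eq:prodAP} then force $\fspstr{a}{c}(U\widehat h(\fy),\widehat h(y))\leqslant b(\fy,y)$. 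Bijectivity of $h\mapsto\widehat h$ and naturality are purely formal.

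The crux — and the point where the hypothesis on $X$ comes in — is to show that $\fspstr{a}{c}$ is transitive, so that $Z^X=(\AP(X,Z),\fspstr{a}{c})$ is a bona fide approach space; equivalently, that the smallest convergence making $\ev$ a contraction is already transitive, so that no enlargement of the structure (which would destroy the transpose estimate above) is required. Writing $\fspstr{a}{c}\cdot\eU\fspstr{a}{c}\geqslant\fspstr{a}{c}\cdot m$ pointwise at a double ultrafilter on $\AP(X,Z)\times X$, pushing the relevant data forward along $\ev$, $\pi_1$, $\pi_2$ to the level of $UUX$, using transitivity of $c$ on the $Z$-side and the compatibility of $m$ and $\eU$ with composition and with the actions $u\maxact(-)$ (Lemma \ref{lem:SomeRulesMax}), one should be left with exactly an inequality of the form
\[
 (u+v)\maxop a(m_X(\fX),x_0)\geqslant\inf\{(u\maxop a(\fx,x_0))+(v\maxop\eU a(\fX,\fx))\mid\fx\in UX\},
\]
the parameters $u,v$ and the joins arising from the suprema that define $\fspstr{a}{c}$, and the sum $+$ and the infimum over $\fx\in UX$ from the transitivity defect. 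I expect this translation to be the main obstacle: one must arrange the nesting of $\inf$, $\sup$, $\maxop$ and $+$ so that the hypothesis applies verbatim, and verify that the ultrafilters produced by the weak-pullback lifts are precisely those over which the right-hand side ranges. Once that bookkeeping is set up, only a diagram chase using Proposition \ref{prop:extU} and the monad laws remains.
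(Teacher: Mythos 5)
Your overall strategy is the right one, and it is essentially the framework the paper itself assembles in Section 5: the candidate exponential is the set of contractions with the pointwise-smallest convergence $d$ making $\ev$ a contraction (this is exactly the $\PSAP$-exponential of \citep{LL89}, formula \eqref{FunSpStr}), the transpose argument via weak pullbacks is fine, and by the Schwarz criterion everything reduces to showing that $d$ (it suffices to treat $Z=[0,\infty]$) is transitive, i.e.\ that the pseudo-approach space $[0,\infty]^X$ is an approach space. Up to that point your sketch is correct, but it only reproduces material the paper imports from \citep{LL89} and \citep{Sch84}.

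The genuine gap is at the crux, which you explicitly defer as ``bookkeeping'': the deduction of transitivity of $d$ from the displayed hypothesis on $a$. That step is the entire mathematical content of the theorem (it is the substance of the result quoted from \citep{Hof06}), and it is not a routine diagram chase. Concretely, with $u=d(\fp,\varphi)$ and $v=\eU d(\fP,\fp)$ you must show, for every $\fq\in U([0,\infty]^X\times X)$ with $U\pi_1(\fq)=m_{[0,\infty]^X}(\fP)$ and every $x_0\in X$, that $(u+v)\maxop a(U\pi_2(\fq),x_0)+\xi\cdot U\!\ev(\fq)\geqslant\varphi(x_0)$. To bring the hypothesis to bear you have to manufacture a suitable $\fX\in UUX$ from $\fq$ and $\fP$ (via weak-pullback lifts and Proposition \ref{prop:extU}), compare $a(m_X(\fX),x_0)$ with $a(U\pi_2(\fq),x_0)$, and then prove that \emph{every} term $(u\maxop a(\fx,x_0))+(v\maxop\eU a(\fX,\fx))$ of the infimum dominates $\varphi(x_0)\ominus\xi\cdot U\!\ev(\fq)$; this last point requires extracting, for each $\fx\in UX$, quantitative estimates from the definitions of $d(\fp,\varphi)$ and of $\eU d(\fP,\fp)$ along further lifted ultrafilters, and it is precisely where the interplay of $\inf$, $+$ and $\maxop$ must be organised so that the hypothesis applies. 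None of this is carried out in your proposal, so as written it does not prove the theorem; note also that the paper itself does not reprove this implication but cites \citep{Hof06}, so supplying exactly this computation is what a self-contained proof would have to add.
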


\section{The cartesian closed category of pseudo-approach spaces}

Similarly to $\TOP$, the category $\AP$ is not cartesian closed since, for instance, a non-exponentiable topological space cannot be exponentiable in $\AP$. This deficiency of $\AP$ led to the introduction of cartesian closed extensions of $\AP$, one of which is the category of pseudo-approach spaces introduced in \citep{LL89}. In our setting, a \df{pseudo-approach space} is a set $X$ equipped with a numerical relation $a:UX\relto X$ that is only required to be reflexive; pseudo-approach spaces with contractions form the category $\PSAP$. The canonical forgetful functor $\PSAP\to\SET$ is topological, therefore $\PSAP$ has, and $\PSAP\to\SET$ preserves all limits and colimits. The convergence of the product $X\times Y$ of pseudo-approach spaces $X$ and $Y$ with convergence relations $a:UX\relto X$ and $b:UY\relto Y$ respectively can be calculated as in \eqref{eq:prodAP}. Moreover, the canonical inclusion functor $\AP\to\PSAP$ has a left adjoint, and $\AP$ is finally dense in $\PSAP$.

As indicated above, one of the main results of \citep{LL89} is that the category $\PSAP$ is cartesian closed, that is, the functor $(-)\times X:\PSAP\to\PSAP$ has a right adjoint $(-)^X:\PSAP\to\PSAP$, for every pseudo-approach space $X$. For pseudo-approach spaces $X$ and $Y$ with convergence relations $a:UX\relto X$ and $b:UY\relto Y$ respectively, the exponential $Y^X$ is the pseudo-approach space 
\[
 Y^X=\{\text{contractions } \varphi:X\to Y\}
\]
equipped with the ``best convergence'' $d$ making the evaluation map
\[
\ev:Y^X\times X\to Y,\,(\varphi,x)\mapsto \varphi(x)
\]
a contraction, that is:
\begin{equation}\label{FunSpStr}
d(\fp,\varphi)
=\inf\{u\in[0,\infty]\mid \forall\fq\in U\pi_1^{-1}(\fp),x\in X\,.\,u\maxop a(U\pi_2(\fq),x)\geqslant b(U\!\ev(\fq),\varphi(x))\},
\end{equation}
for all $\fp\in U(Y^X)$ and $\varphi\in Y^X$.

The link between exponentiability in $\AP$ and $\PSAP$ is exposed by the following result which is an instance of \citep[Theorem 3.3]{Sch84}.
\begin{proposition}
Let $X$ be an approach space. Then the following assertions are equivalent.
\begin{eqcond}
\item $X$ is exponentiable in $\AP$.
\item For every approach space $Y$, the pseudo-approach space $Y^X$ is actually an approach space.
\item The pseudo-approach space $[0,\infty]^X$ is an approach space.
\end{eqcond}
\end{proposition}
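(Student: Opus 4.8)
The plan is to establish the cycle (i) $\Rightarrow$ (iii) $\Rightarrow$ (ii) $\Rightarrow$ (i), using throughout that $\AP\to\PSAP$ is full and commutes with $(-)\times X$ — indeed it identifies finite products of approach spaces, both being given by \eqref{eq:prodAP}. The implication (ii) $\Rightarrow$ (i) is then immediate: if every exponential $Y^X$ formed in $\PSAP$ is an approach space, then $(-)\times X$ and $(-)^X$ both restrict to functors $\AP\to\AP$, and for approach spaces $Z$ and $Y$ one has $\AP(Z\times X,Y)=\PSAP(Z\times X,Y)\cong\PSAP(Z,Y^X)=\AP(Z,Y^X)$, so the adjunction $(-)\times X\dashv(-)^X$ of $\PSAP$ restricts to $\AP$; that is, $X$ is exponentiable in $\AP$.

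For (iii) $\Rightarrow$ (ii) I would first record the key stability property: \emph{$\AP$ is closed in $\PSAP$ under the formation of initial sources.} Being reflective in $\PSAP$, the subcategory $\AP$ is closed under limits, in particular under products; and for any map $g\colon Z\to Y$ into an approach space $Y$ with convergence $b$, the $\PSAP$-initial structure $a=g^\circ\cdot b\cdot Ug$ on $Z$ is again an approach structure — reflexivity is clear from naturality of $e$, while transitivity of $a$ follows from that of $b$ by a pointwise computation that uses the identity $\eU a=(Ug)^\circ\cdot\eU b\cdot UUg$ (functoriality of $\eU$) together with the naturality square $m_Y\cdot UUg=Ug\cdot m_Z$. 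Since a source $(f_i\colon Z\to Y_i)_i$ is initial exactly when the induced map $Z\to\prod_i Y_i$ is, this yields the stability property. Now, given an approach space $Y$, initial density of $[0,\infty]$ in $\AP$ provides an initial source $(f_i\colon Y\to[0,\infty])_{i\in I}$ in $\AP$, which by the stability property is also initial in $\PSAP$. Applying the functor $(-)^X$ of the cartesian closed category $\PSAP$ produces a source $(f_i^X\colon Y^X\to[0,\infty]^X)_{i\in I}$ that is again initial in $\PSAP$: for a pseudo-approach space $W$, a map $h\colon W\to Y^X$ is a contraction iff its transpose $\bar h\colon W\times X\to Y$ is, iff each $f_i\cdot\bar h$ is a contraction (by initiality of $(f_i)$, applied to the object $W\times X$), iff each $f_i^X\cdot h$ is. Hypothesis (iii) makes every $[0,\infty]^X$ an approach space, so the stability property forces $Y^X$ to be an approach space.

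For (i) $\Rightarrow$ (iii) I would compare the two exponentials $[0,\infty]^X$, one computed in $\AP$ and one in $\PSAP$; they share the underlying set $\AP(X,[0,\infty])=\PSAP(X,[0,\infty])$, and the goal is to see that their convergence relations coincide. On one side, the counit of the $\AP$-adjunction makes $\ev\colon [0,\infty]^X\times X\to[0,\infty]$ a contraction in $\AP$, hence in $\PSAP$; since by \eqref{FunSpStr} the convergence of the $\PSAP$-exponential is the least one rendering $\ev$ a contraction, the convergence of the $\AP$-exponential lies pointwise above it. On the other side, since $\AP$ is finally dense in $\PSAP$, the $\PSAP$-exponential carries the final structure with respect to the contractions into it from approach spaces, so the identity map from the $\PSAP$-exponential to the $\AP$-exponential is a contraction as soon as, for each such contraction $h\colon A\to[0,\infty]^X$, the underlying map of $h$ is a contraction into the $\AP$-exponential; and this holds because transposing $h$ through $X$ gives a contraction $A\times X\to[0,\infty]$ in $\PSAP$, hence in $\AP$ by fullness, whose transpose back is a contraction $A\to[0,\infty]^X$ in $\AP$ carried by the same map as $h$. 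Hence the two convergences agree, and the $\PSAP$-exponential $[0,\infty]^X$ is an approach space.

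I expect the main obstacle to be the stability property — specifically the pointwise verification that transitivity of a convergence relation is inherited by pullbacks along an arbitrary map; once this (together with reflectivity of $\AP$ in $\PSAP$) is in hand, the remainder is formal manipulation of the exponential adjunction of $\PSAP$ and of the initial and final density of $\AP$ in $\PSAP$.
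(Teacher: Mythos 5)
Your proposal is correct, but it takes a genuinely different route from the paper, which offers no in-text argument at all: there the proposition is simply declared to be an instance of \citep[Theorem~3.3]{Sch84} on product compatible reflectors. What you do is unfold that general theorem in this concrete instance, using exactly the structural facts the paper records: fullness of $\AP\to\PSAP$ and the agreement of products via \eqref{eq:prodAP} for (ii) $\Rightarrow$ (i); closure of $\AP$ under $\PSAP$-initial structures (your pointwise computation with $\eU a=(Ug)^\circ\cdot\eU b\cdot UUg$ and naturality of $m$ is the right one), initial density of $[0,\infty]$ in $\AP$, and the exponential adjunction of $\PSAP$ for (iii) $\Rightarrow$ (ii); and the description \eqref{FunSpStr} of $d$ as the least convergence making $\ev$ a contraction together with final density of $\AP$ in $\PSAP$ for (i) $\Rightarrow$ (iii). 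The citation buys brevity and generality; your version is self-contained and shows where each reflectivity/density hypothesis enters. Two steps are stated tersely and would deserve an explicit line: in (i) $\Rightarrow$ (iii), the identification of the underlying set of the $\AP$-exponential with $\AP(X,[0,\infty])$ and of its counit with the evaluation map (via representability of the forgetful functor by the terminal object and $1\times X\cong X$); and in (iii) $\Rightarrow$ (ii), the passage from ``initial in $\AP$'' to ``initial in $\PSAP$'', which uses that the $\PSAP$-initial lift is an approach structure by your stability property, is therefore also $\AP$-initial, and hence coincides with the given structure by uniqueness of initial lifts. Neither is a gap, just routine detail to be filled in.
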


Therefore, it is sufficient to consider the exponential $[0,\infty]^X$ with its ultrafilter convergence $d:U([0,\infty]^X)\relto [0,\infty]^X$. In this case, the condition
\[
 u\maxop a(U\pi_2(\fq),x)\geqslant b(U\!\ev(\fq),\varphi(x))
\]
in \eqref{FunSpStr} becomes
\[
 u\maxop a(U\pi_2(\fq),x)\geqslant \varphi(x)\ominus\xi(U\!\ev(\fq)),
\]
which is equivalent to
\[
 (u\maxop a(U\pi_2(\fq),x))+\xi(U\!\ev(\fq)) \geqslant \varphi(x).
\]
Recall from Lemma \ref{lem:SomeMaps} that the map $t_u:[0,\infty]\to[0,\infty]$ is a contraction for every $u\in[0,\infty]$; the right adjoint $(-)^X$ thus yields a contraction $t_u^X:[0,\infty]^X\to[0,\infty]^X$ for every pseudo-approach space $X$. In the sequel we write $u\maxact\varphi$ instead of $t_u(\varphi)$, $u\maxact\fp$ instead of $Ut_u(\fp)$, and so on.

The following technical lemma will be used in the proof of our main result.

\begin{lemma}\label{lem:UmaxactD}
Let $X$ be a pseudo-approach space, $\fP\in UU([0,\infty]^X)$, $\fp\in U([0,\infty]^X)$, $\varphi\in [0,\infty]^X$ and $u\in[0,\infty]$. Then
\begin{align*}
 u\maxact d(\fp,\varphi)\geqslant d(\fp,u\maxact\varphi) &&\text{and}&& u\maxact \eU d(\fP,\fp)\geqslant\eU d(\fP,u\maxact\fp).
\end{align*}
\end{lemma}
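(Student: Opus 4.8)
The plan is to prove the two inequalities separately, with the second following from the first by applying $\eU$ and using the compatibility results for $\eU$ with the action $u\maxact(-)$ and with composition recorded at the end of Section~3. For the first inequality, I would unfold the definition \eqref{FunSpStr} of $d$ on both sides and compare the two infima. Concretely, $d(\fp,u\maxact\varphi)$ is the infimum of those $w\in[0,\infty]$ such that for all $\fq\in U\pi_1^{-1}(\fp)$ and $x\in X$ one has $w\maxop a(U\pi_2(\fq),x)\geqslant (u\maxop\varphi(x))\ominus\xi(U\!\ev(\fq))$, where here $\ev$ is evaluation into $[0,\infty]$; while $u\maxact d(\fp,\varphi)=u\maxop d(\fp,\varphi)$, with $d(\fp,\varphi)$ the infimum of those $v$ satisfying $v\maxop a(U\pi_2(\fq),x)\geqslant\varphi(x)\ominus\xi(U\!\ev(\fq))$ for all such $\fq,x$. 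So it suffices to show that $w:=u\maxop v$ is an admissible value in the first infimum whenever $v$ is admissible in the second.

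For this, fix $\fq\in U\pi_1^{-1}(\fp)$ and $x\in X$, and let $v$ be admissible for $d(\fp,\varphi)$. I would split into the two cases that control $u\maxop\varphi(x)$. If $u\geqslant\varphi(x)$, then $(u\maxop\varphi(x))\ominus\xi(U\!\ev(\fq))=u\ominus\xi(U\!\ev(\fq))\leqslant u\leqslant u\maxop v\maxop a(U\pi_2(\fq),x)=w\maxop a(U\pi_2(\fq),x)$, as required. If instead $\varphi(x)\geqslant u$, then $(u\maxop\varphi(x))\ominus\xi(U\!\ev(\fq))=\varphi(x)\ominus\xi(U\!\ev(\fq))\leqslant v\maxop a(U\pi_2(\fq),x)\leqslant w\maxop a(U\pi_2(\fq),x)$, using admissibility of $v$ in the first step. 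In either case $w$ is admissible, hence $d(\fp,u\maxact\varphi)\leqslant w=u\maxop v$; taking the infimum over admissible $v$ gives the first inequality. (One should note that a subtlety here is that $U\!\ev$ changes when we replace $\varphi$ by $u\maxact\varphi$: the relevant evaluation ultrafilter is $U\!\ev$ applied to $\fq$ pushed through $Ut_u$ on the first coordinate; but since $t_u$ is a contraction and $\xi$ is monotone, this only increases the relevant $\xi$-value, which makes the inequality easier, so the argument above goes through with that reading.)

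For the second inequality, apply $\eU$ to both sides of the first. Since $\eU$ is locally monotone, $\eU(u\maxact d)\geqslant\eU\big(d\cdot(Ut_u\text{ on the codomain})\big)$ is not quite the shape we want; instead I would argue directly from Proposition~\ref{prop:extU} (or from the pointwise formula for $\eU d$) together with the identity $\eU(u\maxact r)=u\maxact\eU r$ stated at the end of Section~3: applying $\eU$ to the numerical-relation inequality $u\maxact d\geqslant d\cdot Ut_u$ (viewing $u\maxact(-)=t_u\cdot(-)$ post-composition as in Lemma~\ref{lem:SomeRulesMax}(2)) and using functoriality of $\eU$ together with $\eU(u\maxact d)=u\maxact\eU d$ and $\eU(d\cdot Ut_u)=\eU d\cdot \eU Ut_u=\eU d\cdot U(Ut_u)$ yields $u\maxact\eU d\geqslant\eU d\cdot U(Ut_u)$, which is exactly $u\maxact\eU d(\fP,\fp)\geqslant\eU d(\fP,u\maxact\fp)$ in pointwise notation, since $u\maxact\fp$ was defined as $Ut_u(\fp)$ and hence $U(Ut_u)$ is the induced map on $UU([0,\infty]^X)$ reindexing the first argument.

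The main obstacle I anticipate is purely bookkeeping: being careful that ``$u\maxact$'' on function-space elements, on ultrafilters of function-space elements, and on double ultrafilters is consistently interpreted as the appropriate image under $t_u$, $Ut_u$, $U(Ut_u)$ respectively, and that the evaluation maps $U\!\ev$ appearing in \eqref{FunSpStr} transform correctly under these operations. Once the case distinction $u\geqslant\varphi(x)$ versus $\varphi(x)\geqslant u$ is in place, the inequalities themselves are elementary manipulations of $\maxop$, $+$ and $\ominus$ in $[0,\infty]$.
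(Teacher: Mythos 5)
Your proof of the first inequality is essentially sound: the case split $u\geqslant\varphi(x)$ versus $\varphi(x)\geqslant u$ correctly shows that $u\maxop v$ is admissible in the infimum defining $d(\fp,u\maxact\varphi)$ whenever $v$ is admissible for $d(\fp,\varphi)$, which is a mild variant of the paper's one-line estimate $(u\maxop v\maxop a(U\pi_2(\fq),x))+\xi\cdot U\!\ev(\fq)\geqslant u\maxop\bigl((v\maxop a(U\pi_2(\fq),x))+\xi\cdot U\!\ev(\fq)\bigr)$. Your parenthetical worry that ``$U\!\ev$ changes'' is a misreading of \eqref{FunSpStr}: the quantification there is over $\fq\in U\pi_1^{-1}(\fp)$ with the \emph{same} first argument $\fp$, and the function $\varphi$ enters only through its value $\varphi(x)$; nothing gets pushed through $Ut_u$. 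Your actual computation already uses this correct reading, so the remark is harmless but should be deleted.

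The second inequality, however, contains a genuine error. The point-free form of what you proved is $u\maxact d\geqslant t_u^\circ\cdot d$, not $u\maxact d\geqslant d\cdot Ut_u$: with the paper's composition convention, $(d\cdot Ut_u)(\fp,\varphi)=d(Ut_u(\fp),\varphi)$ transforms the \emph{first} argument of $d$, whereas the first inequality states $u\maxop d(\fp,\varphi)\geqslant d(\fp,t_u(\varphi))=(t_u^\circ\cdot d)(\fp,\varphi)$, a transformation of the \emph{second} argument. So the inequality $u\maxact d\geqslant d\cdot Ut_u$ you start from was never established (and is not what part one gives); moreover, even granting it, applying $\eU$ gives $u\maxact\eU d\geqslant\eU d\cdot UUt_u$, whose value at $(\fP,\fp)$ is $\eU d(UUt_u(\fP),\fp)$ and not the required $\eU d(\fP,Ut_u(\fp))$ --- your closing claim that this ``is exactly'' the second inequality silently swaps which argument is being reindexed. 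The repair is the route you sketched, carried out on the correct side (this is the paper's proof): apply the locally monotone functor $\eU$ to $u\maxact d\geqslant t_u^\circ\cdot d$ and use $\eU(u\maxact d)=u\maxact\eU d$ together with $\eU(t_u^\circ)=(\eU t_u)^\circ=(Ut_u)^\circ$ to obtain $u\maxact\eU d\geqslant(Ut_u)^\circ\cdot\eU d$, which evaluated at $(\fP,\fp)$ is precisely $u\maxop\eU d(\fP,\fp)\geqslant\eU d(\fP,u\maxact\fp)$.
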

\begin{proof}
To see the first inequality, let $\fq\in U([0,\infty]^X\times X)$ with $U\pi_1(\fq)=\fp$ and let $x\in X$. By definition of $d$ and the previous discussion, it is sufficient to show that
\[
 (u\maxop d(\fp,\varphi)\maxop a(U\pi_2(\fq),x))+\xi\cdot U\!\ev(\fq)\geqslant u\maxop\varphi(x).
\]
But the left-hand side above is larger or equal to
\[
 u\maxop ((d(\fp,\varphi)\maxop a(U\pi_2(\fq),x))+\xi\cdot U\!\ev(\fq)),
\]
so the assertion follows from
\[
 (d(\fp,\varphi)\maxop a(U\pi_2(\fq),x))+\xi\cdot U\!\ev(\fq)\geqslant\varphi(x).
\]
To see the second inequality, just note that in point-free notation the first one reads as
\[
 u\maxact d\geqslant t_u^\circ\cdot d,
\]
hence $u\maxact\eU d=\eU(u\maxact d)\geqslant (Ut_u)^\circ\cdot\eU d$.
\end{proof}

\section{Exponentiable approach spaces}

We are now in position to prove our main result.

\begin{theorem}\label{thm:AppExpChar}
Let $X$ be an approach space with ultrafilter convergence $a:UX\relto X$. Then $X$ is exponentiable if and only if
\[
 (u+v)\maxop a(m_X(\fX),x_0)\geqslant\inf\{(u\maxop a(\fx,x_0))+(v\maxop\eU a(\fX,\fx))\mid \fx\in UX\}.
\]
for all $\fX\in UUX$, $x_0\in X$ and $u,v\in[0,\infty]$.
\end{theorem}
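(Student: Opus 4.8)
The theorem has two directions: sufficiency is exactly Theorem~\ref{thm:AppExpsuff}, so the only thing to prove is necessity. Thus the plan is to assume $X$ is exponentiable in $\AP$, which by the proposition preceding means that the pseudo-approach space $[0,\infty]^X$, with convergence $d:U([0,\infty]^X)\relto[0,\infty]^X$ from \eqref{FunSpStr}, is actually an approach space; in particular $d$ is \emph{transitive}. I will then feed carefully chosen ultrafilters and test maps into this transitivity inequality and read off the desired estimate on $a$.

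First I would fix $\fX\in UUX$, $x_0\in X$ and $u,v\in[0,\infty]$, and consider the contraction $\varphi_{u,v}:X\to[0,\infty]$ supplied by Corollary~\ref{cor:phi_u_v_cont}, which is an element of $[0,\infty]^X$; note $\varphi_{u,v}(x_0)\leqslant(u\maxop a(\fx,x_0))+(v\maxop\eU a(\fX,\fx))$ for every $\fx\in UX$, and the right-hand side of the theorem is exactly $\inf_{\fx}$ of that expression, so it suffices to bound $\varphi_{u,v}(x_0)$ from above by $(u+v)\maxop a(m_X(\fX),x_0)$. To get at $\varphi_{u,v}(x_0)$ I would use that, for an approach space, the point evaluation recovers values through $d$ in the style of the proposition on $\varphi:1\relto[0,\infty]$: more precisely, following the proof of \citep[Theorem~6.9]{Hof07}, one builds an ultrafilter on $[0,\infty]^X\times X$ (or on $[0,\infty]^X$) that points suitably at $\varphi_{u,v}$ and at the data $\fX$, $x_0$, so that the transitivity inequality $\eU d(\fD,\fd)+d(\fd,\varphi)\geqslant d(m(\fD),\varphi)$ together with the defining formula \eqref{FunSpStr} for $d$ turns into the claimed inequality for $a$. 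The manipulations with the $\maxact$-action are handled by Lemma~\ref{lem:SomeRulesMax} and Lemma~\ref{lem:UmaxactD}: since $t_u^X$ and $t_v^X$ are contractions, one may replace $\varphi$ by $u\maxact\varphi$ etc.\ and shuffle the constants $u,v$ past $d$ and $\eU d$, at the cost of only a $\maxop$, which is precisely what produces the $(u+v)\maxop(-)$ on the left.

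The key computation is to choose the right ultrafilter $\fq\in U([0,\infty]^X\times X)$ (equivalently, the right ultrafilter on $[0,\infty]^X$ lying over a prescribed ultrafilter and over $\fX$). The natural choice is to take an ultrafilter $\fw$ on $[0,\infty]^X\times X$ whose second projection is $m_X(\fX)$ and whose behaviour along the first coordinate encodes the family $(u\maxact(v\maxact\,\cdot\,))$ applied through the convergence of $X$; then $\xi\cdot U\!\ev(\fw)$ computes to something controlled by $\eU a(\fX,-)$ and $a(-,x_0)$, and feeding this into \eqref{FunSpStr} gives $d(\fq,\varphi_{u,v})\leqslant a(m_X(\fX),x_0)$ up to the $\maxact$-constants. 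Combining with reflexivity of $d$ (so that $d(\dot\varphi,\varphi)=0$) and transitivity yields $\varphi_{u,v}(x_0)\leqslant(u+v)\maxop a(m_X(\fX),x_0)$, as required; Proposition~\ref{prop:extU} is the tool that lets one pass between $\eU a$ and the $\xi\cdot U a(\fw)$ description when verifying the projections match up.

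The main obstacle I expect is exactly this bookkeeping of ultrafilters and projections: one must produce, for arbitrary $\fX\in UUX$, an ultrafilter on a product set whose images under the relevant $U\pi_i$ and $U\!\ev$ are simultaneously the prescribed data, and whose $\xi$-limit realises (not merely bounds) the infimum defining $\varphi_{u,v}$. Getting the direction of every inequality right while commuting $\maxact$ through $\eU$ and through composition (using $\eU(u\maxact r)=u\maxact\eU r$ and Lemma~\ref{lem:UmaxactD}) is delicate; the existence of the needed ultrafilter will ultimately rest on $U$ preserving weak pullbacks, which guarantees one can lift compatible ultrafilters along the projections of $[0,\infty]^X\times X$.
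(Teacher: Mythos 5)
Your overall strategy is the paper's: sufficiency is Theorem \ref{thm:AppExpsuff}, and necessity is to be extracted from transitivity of the convergence $d$ on $[0,\infty]^X$, tested against the contraction $\varphi_{u,v}$ of Corollary \ref{cor:phi_u_v_cont}, following \citep[Theorem~6.9]{Hof07}. But the sketch leaves a genuine gap exactly where the work lies. You never construct the ultrafilters that make the test possible: the paper sets $\yoneda=\mate{a}:UX\to[0,\infty]^X$, $\yoneda_0=\yoneda\cdot e_X$, and takes $\fp=U\yoneda(\fX)$, $\fP=UU\yoneda_0(\fX)$, $\fQ=UU\langle\,\yoneda_0,1_X\rangle(\fX)$, and it crucially needs the two vanishing facts $0=\eU d(\fP,\fp)$ and $0=\xi\cdot U\!\ev\cdot m_{[0,\infty]^X\times X}(\fQ)$. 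Saying one ``builds an ultrafilter that points suitably at $\varphi_{u,v}$ and at the data'' defers precisely these constructions and verifications, which are the heart of the argument. Note also that to bound $d(\fp',\varphi_{u,v})$ from above via \eqref{FunSpStr} you must verify the inequality for \emph{all} $\fq$ with $U\pi_1(\fq)=\fp'$; the only choice available is the lift $\fw\in U(UX\times X)$ of a given $\fq$ through the weak pullback, so ``choosing the right $\fq$'' has the quantifier backwards.

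Your final assembly is also off. You propose to show $d(\fq,\varphi_{u,v})\leqslant a(m_X(\fX),x_0)$ up to $\maxact$-constants and then conclude using reflexivity $d(\doo{\varphi},\varphi)=0$; reflexivity plays no role here and cannot extract the value $\varphi_{u,v}(x_0)$ from $d$-data. The actual argument splits the constants: $v\geqslant\eU d(\fP,v\maxact\fp)$ (Lemma \ref{lem:UmaxactD} plus $\eU d(\fP,\fp)=0$) and $u\geqslant d(v\maxact\fp,\varphi_{u,v})$ (this is where the weak-pullback lifting, the identity $\ev\cdot(\yoneda\times 1_X)=a$ and Proposition \ref{prop:extU} enter); transitivity of $d$ then gives $u+v\geqslant d(m_{[0,\infty]^X}(\fP),\varphi_{u,v})$, and only at this point does $a(m_X(\fX),x_0)$ appear, by passing to the product convergence $d'$ at $m_{[0,\infty]^X\times X}(\fQ)$, whose projections are $m_{[0,\infty]^X}(\fP)$ and $m_X(\fX)$, and using contractivity of $\ev$ together with $\xi\cdot U\!\ev\cdot m_{[0,\infty]^X\times X}(\fQ)=0$ to obtain $d'(m_{[0,\infty]^X\times X}(\fQ),(\varphi_{u,v},x_0))\geqslant\varphi_{u,v}(x_0)$. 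Without the specific Yoneda-type ultrafilters, their vanishing properties, and this evaluation step, the inequality does not come out; supplying them is not routine bookkeeping.
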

\begin{proof}
By Theorem~\ref{thm:AppExpsuff}, we only need to show that the condition is necessary for $X$ to be exponentiable. To this end, assume that $X$ is an exponentiable approach space and let $\fX\in UUX$ and $x_0\in X$. Set $\yoneda=\mate{a}:UX\to[0,\infty]^X$, $\yoneda_0=\yoneda\cdot e_X:X\to[0,\infty]^X$ and
\begin{alignat*}{4}
 \fp=U\yoneda(\fX), &\qquad \fP=UU\yoneda_0(\fX)&\qquad\text{and}\qquad& \fQ=UU\langle\,\yoneda_0,1_X\rangle(\fX).
\end{alignat*}
Note that $UU\pi_1(\fQ)=\fP$ and $UU\pi_2(\fQ)=\fX$. As before, the ultrafilter convergence on $[0,\infty]^X$ is denoted by $d$, and in the sequel $d'$ denotes the convergence on the product space $[0,\infty]^X\times X$.

We start by showing the following facts.
\begin{enumerate}
\item\label{fact1} $0=\xi\cdot U\!\ev\cdot m_{[0,\infty]^X\times X}(\fQ)$ and $0=\eU d(\fP,\fp)$. 
\item\label{fact2} $v\geqslant \eU d(\fP,v\maxact\fp)$.
\item\label{fact3} $u\geqslant d(v\maxact\fp,\varphi_{u,v})$.
\end{enumerate}
The two equalities in \eqref{fact1} can be shown exactly as in the proof of \citep[Theorem~6.9]{Hof07}. From Lemma \ref{lem:UmaxactD} we can then infer $v=v\maxop\eU d(\fP,\fp)\geqslant\eU d(\fP,v\maxop\fp)$, which proves \eqref{fact2}. To prove \eqref{fact3}, recall first from Corollary \ref{cor:phi_u_v_cont} that $\varphi_{u,v}$ is indeed an element of the function space $[0,\infty]^X$. Let $x\in X$ and $\fq\in U([0,\infty]^X\times X)$ with $U\pi_1(\fq)=v\maxact\fp$; by definition of $d$, it suffices to verify
\[
 (u\maxop a(U\pi_2(\fq),x))+\xi\cdot U\!\ev(\fq)\geqslant \varphi_{u,v}(x).
\]
Both squares in
\[
 \xymatrix{UX\times X\ar[rr]^{\yoneda\times 1_X}\ar[d]_{\pi_1} && [0,\infty]^X\times X\ar[rr]^{t_v^X\times 1_X}\ar[d]_{\pi_1} && [0,\infty]^X\times X\ar[d]^{\pi_1}\\
  UX\ar[rr]^\yoneda &&  [0,\infty]^X\ar[rr]^{t_v^X} && [0,\infty]^X}
\]
are pullbacks, hence, since $U$ preserves weak pullbacks, there exists some $\fw\in U(UX\times X)$ with $U((t_v^X\cdot\yoneda)\times 1_X)(\fw)=\fq$ and $U\pi_1(\fw)=\fX$. By definition of $\yoneda$, we have $\ev\cdot(\yoneda\times 1_X)=a$, and moreover the diagram
\[
 \xymatrix{[0,\infty]^X\times X\ar[rr]^{t_v^X\times 1_X}\ar[d]_\ev && [0,\infty]^X\times X\ar[d]^\ev\\
    [0,\infty]\ar[rr]_{t_v} && [0,\infty]}
\]
commutes by naturality of $\ev$; therefore, by Proposition~\ref{prop:extU}, we have $\xi\cdot U\!\ev(\fq)\geqslant v\maxop \eU a(\fX,U\pi_2(\fq))$, and we can verify~\eqref{fact3}:
\[
 (u\maxop a(U\pi_2(\fq),x))+\xi\cdot U\!\ev(\fq)\geqslant (u\maxop a(U\pi_2(\fq),x))+(v\maxop \eU a(\fX,U\pi_2(\fq)))\geqslant\varphi_{u,v}(x).
\]
Hence, by transitivity of $d$, it follows from \eqref{fact2} and \eqref{fact3} that
\[
 u+v\geqslant d(m_{[0,\infty]^X}(\fP),\varphi_{u,v})
\]
and therefore, by definition of the product structure on $[0,\infty]^X\times X$, the fact that $\ev$ is a contraction, and  \eqref{fact1},
\begin{align*}
 (u+v)\maxop a(m_X(\fX),x_0)
 &\geqslant d(m_{[0,\infty]^X}(\fP),\varphi_{u,v})\maxop a(m_X(\fX),x_0)\\
 &= d'(m_{[0,\infty]^X\times X}(\fQ),(\varphi_{u,v},x_0))\\
 &\geqslant \varphi_{u,v}(x_0)\ominus \xi\cdot U\!\ev\cdot m_{[0,\infty]^X\times X}(\fQ)\\
 &=\varphi_{u,v}(x_0)=\inf\{(u\maxop a(\fx,x_0))+(v\maxop\eU a(\fX,\fx))\mid \fx\in UX\}.\qedhere
\end{align*}
\end{proof}



\begin{thebibliography}{15}\setlength{\itemsep}{1ex}\small
\providecommand{\natexlab}[1]{#1}
\providecommand{\url}[1]{\texttt{#1}}
\providecommand{\urlprefix}{URL }
\providecommand{\eprint}[2][]{\url{#2}}

\bibitem[{Clementino and Hofmann(2003)}]{CH03}
\text{Clementino, M.~M.} and \text{Hofmann, D.} (2003), Topological
  features of lax algebras, \emph{Appl. Categ. Structures} \textbf{11}~(3),
  267--286.

\bibitem[{Clementino and Hofmann(2006)}]{CH06}
\text{Clementino, M.~M.} and \text{Hofmann, D.} (2006), Exponentiation in
  {$V$}-categories, \emph{Topology Appl.} \textbf{153}~(16), 3113--3128.

\bibitem[{Clementino and Hofmann(2009)}]{CH09}
\text{Clementino, M.~M.} and \text{Hofmann, D.} (2009), Lawvere
  completeness in topology, \emph{Appl. Categ. Structures} \textbf{17}~(2),
  175--210, \eprint{arXiv:math.CT/0704.3976}.

\bibitem[{Hofmann(2006)}]{Hof06}
\text{Hofmann, D.} (2006), Exponentiation for unitary structures,
  \emph{Topology Appl.} \textbf{153}~(16), 3180--3202.

\bibitem[{Hofmann(2007)}]{Hof07}
\text{Hofmann, D.} (2007), Topological theories and closed objects,
  \emph{Adv. Math.} \textbf{215}~(2), 789--824.

\bibitem[{Hofmann(2013)}]{Hof13}
\text{Hofmann, D.} (2013), Duality for distributive spaces, \emph{Theory
  Appl. Categ.} \textbf{28}~(3), 66--122, \eprint{arXiv:math.CT/1009.3892}.

\bibitem[{Lawvere(1973)}]{Law73}
\text{Lawvere, F.~W.} (1973), Metric spaces, generalized logic, and closed
  categories, \emph{Rendiconti del Seminario Matematico e Fisico di Milano}
  \textbf{43}, 135--166, {Republished in: Reprints in Theory and Applications
  of Categories, No. 1 (2002) pp 1--37},
  \eprint{http://www.tac.mta.ca/tac/reprints/articles/1/tr1.pdf}.

\bibitem[{Lowen and Lowen(1988)}]{LL88}
\text{Lowen, E.} and \text{Lowen, R.} (1988), A quasitopos containing
  {CONV} and {MET} as full subcategories, \emph{Internat. J. Math. Math. Sci.}
  \textbf{11}~(3), 417--438.

\bibitem[{Lowen and Lowen(1989)}]{LL89}
\text{Lowen, E.} and \text{Lowen, R.} (1989), Topological quasitopos hulls
  of categories containing topological and metric objects, \emph{Cahiers
  Topologie G{\'e}om. Diff{\'e}rentielle Cat{\'e}g.} \textbf{30}~(3), 213--228.

\bibitem[{Lowen \emph{et~al.}(1997)Lowen, Lowen and Verbeeck}]{LLV97}
\text{Lowen, E.}, \text{Lowen, R.} and \text{Verbeeck, C.} (1997),
  Exponential objects in the construct {PRAP}, \emph{Cahiers Topologie
  G{\'e}om. Diff{\'e}rentielle Cat{\'e}g.} \textbf{38}~(4), 259--276.

\bibitem[{Lowen(1989)}]{Low89}
\text{Lowen, R.} (1989), Approach spaces: a common supercategory of {TOP} and
  {MET}, \emph{Math. Nachr.} \textbf{141}, 183--226.

\bibitem[{Lowen(1997)}]{Low97}
\text{Lowen, R.} (1997), \emph{Approach spaces}, Oxford Mathematical
  Monographs, The Clarendon Press Oxford University Press, New York, x+253
  pages, the missing link in the topology-uniformity-metric triad, Oxford
  Science Publications.

\bibitem[{Lowen and Sioen(2004)}]{LS04}
\text{Lowen, R.} and \text{Sioen, M.} (2004), On the multitude of monoidal
  closed structures on {$\bf UAP$}, \emph{Topology Appl.} \textbf{137}~(1-3),
  215--223, {IV} Iberoamerican Conference on Topology and its Applications.

\bibitem[{Manes(1969)}]{Man69}
\text{Manes, E.~G.} (1969), A triple theoretic construction of compact
  algebras, {Sem. on Triples and Categorical Homology Theory, ETH Z{\"u}rich
  1966/67, Lect. Notes Math. 80, 91-118 (1969)}.

\bibitem[{Schwarz(1984)}]{Sch84}
\text{Schwarz, F.} (1984), Product compatible reflectors and
  exponentiability, in \emph{Categorical topology (Toledo, Ohio, 1983)},
  volume~5 of \emph{Sigma Ser. Pure Math.}, pages 505--522, Heldermann, Berlin.

\end{thebibliography}
\end{document}